\newtheorem{theorem}{Theorem}
\newtheorem{proposition}[theorem]{Proposition}
\newtheorem{lemma}[theorem]{Lemma}
\theoremstyle{definition}
\theoremstyle{remark}
\def\C{\mathbb{C}}
\def\R{\mathbb{R}}
\def\X{\mathcal{X}}
\def\cap{{\rm cap}({\mathcal{X}})}
\def\Y{\mathcal{Y}}
\def\tm{t_{\rm min}}
\def\E{\mathcal{E}}
\begin{document}
\begin{frontmatter}
\title{\textbf{Eigenvalue clustering, control energy, and logarithmic capacity}}

\author[ise]{Alex Olshevsky\corref{ack}}

\address[ise]{Department of ISE, University of Illinois at Urbana-Champaign, USA \\ {\tt aolshev2@illinois.edu}}
\cortext[ack]{This research was supported by NSF grant ECCS-1351684.}
\begin{abstract} We prove two bounds showing that if the eigenvalues of a matrix are clustered in a region of the complex plane then the corresponding discrete-time linear system requires significant energy to control.  A curious feature of one of our bounds is that the dependence on the region  is via its logarithmic capacity, which is a measure of how well a unit of mass may be spread out over the region to minimize a  logarithmic potential. 
\end{abstract}
\end{frontmatter}

\section{Introduction} We will consider discrete-time linear systems 
\begin{equation} \label{maineq} x(t+1) = A x(t) + B u(t),  \end{equation} where $A \in \C^{n \times n}$ and $B \in \C^{n \times k}$. Our goal is to understand the relation between the locations of the eigenvalues of $A$ within the complex plane and the energy needed to steer Eq. (\ref{maineq}) by choosing the input $u(t)$. We will prove two bounds to the effect that if the eigenvalues of $A$ are  clustered, then Eq. (\ref{maineq}) is ``difficult to control'' in the sense of requiring large inputs to steer between states.

Our work is related to a growing body of literature investigating the control properties of large-scale systems. A strand of this literature, to which this paper belongs, is to identify the fundamental limitations for controlling such networks \citep{cortes, tzoumas1, tzoumas2, summers, pasq, zamp2, pasq2, motter}. A common concern is control difficulty when $n$ (the number of states) is large; it has been experimentally observed that in some scenarios the minimum control energy grows exponentially as a function of $n$ \citep{motter, barabasi}. In this note, we will study how eigenvalue locations of $A$ in the complex plane can sometimes be a de-facto obstacle to efficient control for systems with many states. 

It is textbook material that the energy needed to steer a linear system is related to the smallest eigenvalue of the controllability Gramian, and we spell this out before describing the problem and our results. Given initial state $x_0$ and final state $x_{\rm f}$, we let $\E( A, B, x_0 \rightarrow x_{\rm f}, t) $ be the minimal energy $\sum_{i=0}^{t-1} ||u(i)||_2^2$ among all inputs which result in $x(t)=x_{\rm f}$ starting from $x(0) = x_0$. We then use this notion to define the ``difficulty of controllability'' of a linear system by considering the worst-case energy needed to move from the origin to a point on the unit sphere, i.e., 
\[ \E(A,B, t) := \sup_{||y||_2 = 1} ~\E(A,B, 0 \rightarrow y, t). \] We will allow both sides to be infinite if there is a vector $y$ on the unit sphere which cannot be reached with any choice of $u(0), \ldots, u(t-1)$. This is not the only way to formalize the difficulty of controllability of a linear system (for example, one might also consider the expected energy to move to a random point on the unit sphere) but it is among the most natural. Defining the $t$-step controllability Gramian as \begin{equation} \label{gramian} W(t) := \sum_{i=0}^t A^i  B B^* (A^*)^i, \end{equation} basic linear algebra then gives that
\[ \mathcal{E}(A,B,t) = \frac{1}{\lambda_{\rm min}(W(t-1))}, \] 
where $\lambda_{\rm min}(W(t-1))$ is the smallest eigenvalue of the nonnegative definite matrix $W(t-1)$. 

Thus the question of how difficult a system is to control (in a certain worst-case sense) reduces to the analysis of the smallest eigenvalue of the controllability Gramian. The study of that eigenvalue is the subject of this note. 

We are motivated by a recent result of \citep{pasq} which showed that if $A$ is a diagonalizable matrix with $m$ eigenvalues within the circle $\X = \{ z ~|~ |z| \leq \mu < 1\}$, then the smallest eigenvalue of  $W(t)$ for any $t$ is upper bounded by a product of two terms one of which is $\mu^{2 (\lceil m/k \rceil -1) }/(1-\mu^2)$ (where recall $k$ is the number of columns of $B$). In other words, if $m$ is large enough compared to $k$ and $\mu$ is not close to $1$, then the smallest eigenvalue of $W(t)$ is exponentially close to zero. 

Our goal here is to produce similar results for other sets $\X$, especially those  which are not contained within the interior of the unit circle. In this case we will not be able to obtain bounds on $\lambda_{\rm min}(W(t))$ for all $t$, but we will be able to upper bound this eigenvalue for some concrete choices of time $t$. 

\subsection{Related work} As already mentioned, the main motivating work for the present paper is \citep{pasq}, which was the first (to our knowledge) to obtain results connecting eigenvalue clustering to lower bounds on control energy. Follow-up work included \citep{pasq2}, which studied the relation between the difficulty of controllability and the propagation of inputs by the system in various directions, and \citep{zamp2} which studied connections to measures of centrality such as PageRank. 

The existence of small eigenvalues of the discrete-time controllability Gramian appears to have not attracted significant attention in the existing control literature beyond the above papers. In continuous time, results on the condition number (which is the ratio of the largest and smallest eigenvalue of the Gramian) in the case when $A$ is stable have been derived \citep{penzl}, as well as more general results on ratios of eigenvalues \citep{antoulas, truhar}.

Our work is also related to a series of recent preprints analyzing properties of eigenvalues of the Gramian of linear \citep{tzoumas1, tzoumas2, summers} and bilinear \citep{cortes} systems. These papers studied the efficiency of algorithms for placement of sensors and actuators, as well as the underlying properties of the Gramian  that allow for efficient approximation algorithms.

\subsection{Our results\label{introex}}

This note has two main results. The first concerns control energy at the first time the system can become controllable, namely  at time $$\tm := \lceil n/k \rceil - 1,$$ where $\lceil x \rceil$ is the smallest integer which is at least $x$ and $k$, recall, is the number of columns of $B$.  It is immediate that if $t<\tm$, then $W(t)$ is singular because  there are not enough columns for the controllability matrix to be full-rank.  Our first result shows that if the eigenvalues of $A$ lie in a set with logarithmic capacity (to be formally defined later) smaller than one, then  $W(\tm)$ has an eigenvalue upper bounded by something that decays to zero exponentially fast in $\tm$. 

Our second result considers $A$ which are Hermitian with $m$ eigenvalues which are stable (i.e, which lie in $[-1,1]$). Roughly speaking, we show that if $t = O \left( \left( \frac{m}{k} \right)^{2-\epsilon} \right)$ for\footnote{We use the standard $O$-notation, i.e., the statement $f=O(g)$ where $f$ and $g$ are positive quantities denotes the existence of a constant $C$ such that $f \leq C g$.} some $\epsilon>0$, the controllability Gramian $W(t)$ has an eigenvalue which is upper bounded by a quantity that goes to zero as $m/k \rightarrow +\infty$.  For example, if $t= O \left( \left( m/k \right)^{3/2} \right)$, our result gives the bound $\lambda_{\rm min} \left( W(t) \right) = O \left( (m/k)^{3/2} e^{-\sqrt{m/k}} \right)$ in this case.

The formal statements of these results are a little involved and will be given within the body of this paper.  We conclude our summary by illustrating their use on some simple examples.  Suppose $x(t+1) = A x(t) + b u(t)$ where $A \in \C^{n \times n}$ is diagonalizable as $V A V^{-1} = D$ and  $b \in \R^{n \times 1}$ is a vector of unit norm. Then:

\begin{itemize} \item If the eigenvalues of $A$ are contained within any equilateral triangle of side length $2$ in the complex plane, then there is some $n_0$ such that for all $n \geq n_0$, we have 
\begin{equation} \label{equilateral} \lambda_{\rm min} \left( W(n-1) \right) \leq ||V||_2^2 ||V^{-1}||_2^2 \cdot 0.133^n. \end{equation}
By contrast, if all the eigenvalues are contained within a circle in the complex plane of the very same area as this equilateral triangle, our methods give the bound
\begin{equation} \lambda_{\rm min} \left( W(n-1) \right) \leq ||V||_2^2 ||V^{-1}||_2^2 \cdot 0.552^n, \label{circle} \end{equation}
once again for $n$ large enough. 
\item Suppose $n=10,000$ and $A$ is a Hermitian matrix at least half of whose eigenvalues are stable. 
It turns out that this is enough information to conclude that \begin{small} \begin{eqnarray}  \lambda_{\rm min}(W(250,000)) & \leq&  1.03 \times 10^{-37} \label{first-sym} \\
\lambda_{\rm min}(W(1,000,000)) & \leq&  1.58 \times 10^{-4} \label{second-sym} 
\end{eqnarray} \end{small} In other words, the presence of many stable modes appears to be a significant obstacle to the efficient control of Hermitian systems.  
\end{itemize}

\subsection{Organization of this paper} We conclude the introduction with Section \ref{notat} which introduces some notation as well as some background facts which we will draw on throughout this paper. Section \ref{seclog} is dedicated to proving our first main result, namely the bound on $W(\tm)$ in terms of logarithmic capacity. Section \ref{secsymm} proves our second main result, which bounds the eigenvalues of $W(t)$ for a range of times $t$ in the special case when the matrix $A$ is Hermitian with many stable eigenvalues. We end with some concluding remarks in Section \ref{concl}.

\subsection{Notation and background\label{notat}}

We first describe some notation that we will use for the remainder of the paper. We use the standard notation 
$o_l(1)$ to denote any function of $l$ that goes to zero as $l \rightarrow +\infty$. Given a matrix $V$, its condition number is defined as ${\rm cond}(V) := ||V||_2 ||V^{-1}||_2$. The Frobenius norm of $V$ is denoted as $||V||_F$. As is standard, $V^*$ will denote the conjugate transpose of $V$ and $\overline{V}$ will denote its (elementwise) complex conjugate. We will use $\mathcal{P}_{j}$ to denote the set of univariate polynomials with complex coefficients of degree at most $j$, and $\mathcal{P}_j'$ to denote the set of {\em monic\footnote{Meaning the coefficient in front of the highest power is one.}} univariate polynomials with complex coefficients of degree $j$. 

Given a compact set $K$ in the complex plane, let $\mu^K$ be the set of probability measures on $K$, i.e., the set of Borel measures $\mu$ supported on $K$ which satisfy $\mu(K)=1$.  We then define $I(K)$, called the logarithmic energy of the
set $K$, as
\[ I(K) := \sup_{\mu \in \mu^K} ~\int_{K \times K} \log |z-w| ~ d\mu(z) d\mu(w). \]  The logarithmic capacity is then defined as 
\[ {\rm cap}(K) := e^{I(K)}. \] 

Logarithmic capacity comes up in our results due to its connection with polynomial approximation, which we now describe. Given a set $\X \subset \C$, we define
\begin{equation} \label{endef} {\rm Err}(l, \X) := \min_{p \in \mathcal{P}_{l-1}} \max_{z \in X} \left| z^l - p(z) \right|. \end{equation} In other words, ${\rm Err}(l,\X)$ is the best possible approximation error of the
function $z^l$ by a polynomial of degree $l-1$ over $\X$. The following statement is part of the ``fundamental theorem of potential theory'' from \citep{levinsaff}:
\[ \lim_{l \rightarrow \infty} \left( {\rm Err}(l,\X) \right)^{1/l} = {\rm cap}(\X).  \] As long as ${\rm cap}(\X) > 0$, we may re-arrange this as 
\begin{equation} \label{cap} {\rm Err}(l,\X) = \left[ \left( 1 + o_l(1) \right) {\rm cap}(\X) \right]^l. \end{equation} In other words, 
the logarithmic capacity of a region determines the growth (or decay) of the polynomial approximation of $z^l$ over that region by polynomials of 
lower degree.

%


\section{Control energy at time $\tm$ and logarithmic capacity\label{seclog}}

We begin with a statement of first our main result. Informally, the 
theorem says that if the eigenvalues of the matrix $A$ lie in the set $\X$, then $\lambda_{\rm min}(W(\tm))$ should scale roughly as $\sim \left( {\rm cap}(\X)^2 \right)^{\tm}$.  The formal statement is given next.

\begin{theorem} Suppose  that $A$ is diagonalizable matrix with $VAV^{-1} = D$ where $D$ is diagonal and with all of its eigenvalues within a set $\X \subset \C$ with $\cap > 0$.  We then have, 
\begin{align*} \lambda_{\rm min} \left( W \left( \tm \right) \right) &  \leq  {\rm cond}^2(V)  ~ ||B||_F^2 \\ & ~~~~\left( \left(1 + o_{\tm}(1) \right)  {\rm cap}^{2} (\X) \right)^{t_{\rm min} }    \end{align*} \label{mainthm}
\end{theorem}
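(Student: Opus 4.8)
The plan is to bound $\lambda_{\min}(W(\tm))$ by exhibiting a single unit vector $y$ along which the Gramian has small quadratic form, and to build that vector out of the polynomial approximation error promised by Eq.~(\ref{cap}). Recall $W(\tm) = \sum_{i=0}^{\tm} A^i B B^* (A^*)^i$, so for any unit vector $y$ we have $y^* W(\tm) y = \sum_{i=0}^{\tm} \| B^* (A^*)^i y \|_2^2$. If instead we test against a vector of the form $q(A^*) y$ where $q$ is a polynomial, the key observation is that $\sum_i \| B^*(A^*)^i q(A^*) y\|^2$ can be rewritten, via the Cayley--Hamilton-style collapse, in terms of how well $z \mapsto z^{\tm+1}$ (and higher powers, but $\tm+1 = \lceil n/k\rceil$ is essentially the relevant degree here) is approximated on the spectrum of $A$. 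More precisely, I would argue: pick the optimal polynomial $p \in \mathcal{P}_{l-1}$ achieving ${\rm Err}(l,\X)$ with $l = \tm+1$, form the monic degree-$l$ polynomial $r(z) = z^l - p(z)$, and use that $r$ is small on $\X \supseteq \sigma(A)$. Then a suitably chosen output direction — concretely, a left eigenvector-type combination or a vector annihilated by a low-degree polynomial in $A^*$ — will make the Gramian quadratic form comparable to $\big(\mathrm{Err}(l,\X)\big)^2$ up to the conditioning factor ${\rm cond}^2(V)$ coming from diagonalizing $A$, and up to $\|B\|_F^2$ from passing through $B$.

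In more detail, the cleanest route uses the diagonalization $A = V^{-1} D V$. Write $A^i = V^{-1} D^i V$, so $W(\tm) = V^{-1} \big( \sum_{i=0}^{\tm} D^i (V B B^* V^*) (D^*)^i \big) V^{-*}$, and hence $\lambda_{\min}(W(\tm)) \le \|V^{-1}\|_2^2 \,\lambda_{\min}\big( \sum_{i=0}^{\tm} D^i \tilde{B}\tilde{B}^* (D^*)^i \big)$ where $\tilde B = VB$ and $\|\tilde B\|_F \le \|V\|_2 \|B\|_F$. This reduces everything to the diagonal case, where $D = \mathrm{diag}(\lambda_1,\dots,\lambda_n)$ with all $\lambda_j \in \X$. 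Now I want a unit vector $y$ with $\sum_{i=0}^{\tm} \| \tilde B^* (D^*)^i y\|^2$ small. Since $\tilde B$ has $k$ columns, the span of $\{ (D^*)^i y : i = 0, \dots, \tm\}$ together with the $k$ columns of $\tilde B$ lives in a space of dimension at most $k(\tm+1) \ge n$, but more usefully: choose $y$ so that $\tilde B^* y = 0$ — possible if $k < n$ leaves a kernel, but in general not — so instead the right move is to choose $y$ to be a left-singular-type vector and bound $\| \tilde B^*(D^*)^i y\| \le \|\tilde B\|_F \, |f_i(\lambda)|$ for scalar functions, then optimize the combination.

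The honest technical heart, and the step I expect to be the main obstacle, is converting "best polynomial approximation of $z^l$ of degree $l-1$" into an actual eigenvector witness for $W(\tm)$ when $B$ has $k > 1$ columns and the eigenvalues are not distinct. The trick should be: partition (or group) the $n$ eigenvalues into $k$ blocks of size about $\tm+1 = \lceil n/k\rceil$ each, and on each block build a monic polynomial of degree equal to the block size whose sup-norm on $\X$ is controlled by ${\rm Err}(\tm+1, \X)$; assembling these into a vector orthogonal to the appropriate coordinate subspaces forces $y^*W(\tm)y$ down to $O\big(\mathrm{cond}^2(V)\|B\|_F^2 \,\mathrm{Err}(\tm+1,\X)^2\big)$. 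Then invoking Eq.~(\ref{cap}), $\mathrm{Err}(\tm+1,\X)^2 = \big[(1+o_{\tm}(1))\,\mathrm{cap}(\X)\big]^{2(\tm+1)} = \big[(1+o_{\tm}(1))\,\mathrm{cap}^2(\X)\big]^{\tm}$ after absorbing the extra factor of $\mathrm{cap}^2(\X)$ and the exponent shift into the $o_{\tm}(1)$ term, yielding the claimed bound. Keeping the block-decomposition argument clean — making sure the degree budget exactly matches $\tm$ and that the "subtract off a polynomial" construction really produces a unit vector with the stated quadratic form — is where I would spend the most care.
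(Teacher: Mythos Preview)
Your reduction via diagonalization is correct, and you have identified the right ingredient --- the quantity $\mathrm{Err}(\tm,\X)$ together with Eq.~(\ref{cap}) --- but the mechanism you propose for converting this into an eigenvalue bound is where the proposal breaks down. The block-partition idea (grouping the $n$ eigenvalues into $k$ blocks and building a separate monic polynomial on each) is a red herring: the approximating polynomial depends only on the set $\X$, not on which particular eigenvalues occur, so there is no reason to partition them, and ``assembling these into a vector orthogonal to the appropriate coordinate subspaces'' is left entirely unspecified. As written, nothing in the proposal actually produces a unit vector $y$ with $y^*W(\tm)y$ controlled by $\mathrm{Err}(\tm,\X)^2$.

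The clean device you are missing is the low-rank/Eckart--Young characterization of the smallest singular value. With $Z=VB$ and $S(\tm)=[Z,\;DZ,\ldots,D^{\tm}Z]\in\C^{n\times(\tm+1)k}$ one has $\lambda_n(Q(\tm))=\sigma_n^2(S(\tm))\le\|S(\tm)-L\|_2^2$ for any $L$ of rank $<n$. Take $L$ equal to $S(\tm)$ in its first $k\tm<n$ columns and, in each of the last $k$ columns, replace $D^{\tm}z_i$ by its orthogonal projection onto $\mathrm{span}(z_i,Dz_i,\ldots,D^{\tm-1}z_i)$. Then $\mathrm{rank}(L)\le k\tm<n$, and the projection residual is exactly $\min_{p\in\mathcal P_{\tm}'}\|p(D)z_i\|_2\le\mathrm{Err}(\tm,\X)\,\|z_i\|_2$, so $\|S(\tm)-L\|_F^2\le\mathrm{Err}(\tm,\X)^2\|V\|_2^2\|B\|_F^2$. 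This handles $k>1$ with no blocking of eigenvalues at all. If you prefer an explicit witness, the same calculation tells you what $y$ to pick: any unit vector orthogonal to the $k\tm<n$ vectors $\{D^jz_i:0\le j\le\tm-1,\;1\le i\le k\}$; for such $y$ one has $\langle D^{\tm}z_i,y\rangle=\langle r(D)z_i,y\rangle$ with $r$ the optimal monic polynomial, and the bound follows immediately.
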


We briefly sketch some intuition for this statement. Observe that if $n>k$ and the (diagonalizable) matrix $A$ only has one eigenvalue (necessarily of multiplicity $n$), then an easy implication of the Popov-Belevitch-Hautus theorem is that the system is uncontrollable. It is therefore reasonable to guess that this fact should have a quantitative extension, namely  some statement to the effect that if all the eigenvalues are close together then the controllability Grammian is close to singular. One formalization of this  is precisely the above theorem. 

Moreover, diagonalizability is crucial for any such bound; consider choosing $A$ to be the lower-shift matrix and $b=e_{1}$. Indeed, for this pair $A,b$, we have that the set $\{0\}$ contains the (single) eigenvalue of the system while the controllability Grammian $W(t)$ has all of its eigenvalues equal to $1$ whenever $t \geq \tm$. Thus for non-diagonalizable systems, having eigenvalues contained in a small set does not translate into good bounds on control energy. Furthermore, by considering diagonalizable approximations to this system, we see any bound we derive cannot depend solely on eigenvalues, and must somehow ``blow up'' whenever the system approaches this non-diagonalizable system, explaining the presence of the ${\rm cond}^2(V)$ term. 


Our first task is to prove the theorem; later in this section, we will show how to use this result to obtain more ``concrete'' estimates on $\lambda_{\rm min}(W(\tm))$. The idea of the proof is to argue that, in the definition of $W(\tm)$ in Eq. (\ref{gramian}), the final $k$ terms have, in an appropriately defined sense, small contributions and consequently $W(\tm)$ is close to singularity. This is a technique we have taken from the papers \citep{tyrt, zam} which we here combine with the arguments of \citep{pasq}.

\begin{proof}[Proof of Theorem \ref{mainthm}] From the definition of the controllability Gramian, we will then have
\begin{align*} V W(t) V^{*} & = \sum_{i=0}^{t} \left( V A^i V^{-1} \right) \left(V B \right) \\ &~~~~~~~~~~~~ \left(  B^* V^{*} \right) \left( V^{-*} (A^*)^i V^* \right). \end{align*}
Introducing the notation $Q(t) := V W(t) V^{*}$ and $Z := VB$, we can rewrite this as
\[ Q(t) = \sum_{i=0}^{t} D^i Z Z^* \overline{D}^i. \]  We will now derive an upper bound on the smallest eigenvalue of $Q(\tm)$, which we will later translate into a bound 
on the smallest eigenvalue of $W(\tm)$. To analyze the smallest eigenvalue of $Q(\tm)$, we introduce the following notation.

We define $z_1, \ldots, z_k$ be the columns of the matrix $Z$, and we define $S(t)$ to be the matrix \[ S(t) := [Z, ~DZ, \ldots, ~D^{t} Z  ]. \]  Observe that  $S(t) \in \C^{n \times (t+1)k}$ and has the columns $D^j z_i$ as $j$ runs over $j=0, \ldots, t$ and $i$ runs over $i=1, \ldots, k$. Moreover, 
\begin{equation} \label{sdef} S(t) S(t)^* = Q(t). \end{equation} Consequently, 
\[ \lambda_n(Q(t)) = \sigma_n^2(S(t)). \]
 Inspecting the last equation, we see that our goal of $\lambda_{\rm min}(Q(\tm))$ can be achieved by upper bounding the smallest singular value of $S(\tm)$. We will do so by arguing that $S(\tm)$ is close to singular. 

The argument proceeds as follows. We define the subspace
 $${\mathcal S}_i^j := {\rm span} \left(z_i, ~D z_i,  \ldots, ~D^{j} z_i \right).$$ We adopt  the  notation of $P_{\Y}$ to denote the matrix which projects onto the subspace $\Y$.  Now the projection of $D^{j+1} z_i$ onto the subspace ${\mathcal S}_i^j$ is naturally some linear combination of the
vectors which span ${\mathcal S}_i^j$, i.e., \begin{small} \begin{align*} P_{\mathcal{S}_i^j} D^{j+1} z_i  & = \alpha_j D^j z_i +    \cdots +  \alpha_1 D z_i + \alpha_0  z_i,
\end{align*} \end{small} for some (possibly complex) coefficients $\alpha_j, \ldots, \alpha_0$. Since $P_{\Y^\perp} (y) = y - P_{\Y}(y)$, we have that \begin{footnotesize}
\begin{align*} P_{(\mathcal{S}_i^j)^\perp} D^{j+1} z_i  & = D^{j+1} z_i - \alpha_j D^j z_i      \cdots -  \alpha_1 D z_i - \alpha_0  z_i,
\end{align*} \end{footnotesize} which we may write as
$P_{(S_i^j)^\perp} D^{j+1} z_i   =  p(D) z_i$, where \[ p(t) := t^{j+1} - \alpha_j t^j - \cdots - \alpha_1 t -  \alpha_0. \] Note that $p(t)$ is a monic polynomial of degree $j+1$, i.e., $p(t) \in \mathcal{P}_{j+1}'$. 

Now since the projection operator onto the subspace $\mathcal{X}$ maps every point to the closest point in $\mathcal{X}$, we have that 
\begin{equation} \label{minchar}  \left| \left| P_{(S_i^j)^\perp} D^{j+1} z_i  \right| \right|_2 = \min_{p \in \mathcal{P}_{j+1}'} \left| \left| p \left(D \right) z_i \right| \right|_2. \end{equation} 

\smallskip

Indeed, if Eq. (\ref{minchar}) did not hold, then there would be a point in ${\mathcal S}_i^j$ which is closer to $D^{j+1} z_i$ than
$P_{\mathcal{S}_i^j} (D^{j+1} z_i)$, which cannot be. 

Using the fact that entries of $D$ lie in the set $\X$ and recalling the definition of ${\rm Err}(j,\X)$ from Eq. (\ref{endef}), we have 
\begin{equation} \left| \left| P_{(S_i^j)^\perp} D^{j+1} z_i \right| \right|_2  \leq   {\rm Err}(j+1,\X) ||z_i||_2. \label{Eappears}
\end{equation}


Recall that our goal is to argue that $S(\tm)$  is close to a singular matrix, and to this end we use the last inequality as follows. We define $L(\tm)$ to be the matrix \begin{small}
\[ L(\tm) := [Z, ~~DZ, \ldots, ~~D^{\tm-1} Z, ~~P_{S_i^{\tm -1}} D^{\tm} Z] \] 
\end{small} In other words, the definition of  $L(\tm)$ is identical to $S(\tm)$ with the exception of the final $k$ columns, which are now multiplied by the
projection matrix $P_{S_i^{\tm -1}}$.  

Using Eq. (\ref{Eappears}), we can conclude that $L(\tm)$ and $S(\tm)$ are not too far apart: \begin{small}
\begin{align} ||S(\tm) - L(\tm)||_2^2 & \leq  ||S(\tm) - L(\tm)||_{\rm F}^2 \\ & \leq  \sum_{i=1}^k  \left| \left|  P_{\left( {\mathcal S}_i^{\tm - 1}\right)^\perp}  D^{\tm} z_i  \right| \right|_2^2 \nonumber  \\ 
& \leq  \left( {\rm Err}(\tm, \X)  \right)^2 \sum_{i=1}^k ||z_i||_2^2  \nonumber \\
& \leq   \left( {\rm Err}(\tm, \X)  \right)^2 ~||V||_2^2 ~ ||B||_F^2, \label{sbound} \end{align} \end{small} 
On the other hand,
\begin{equation} \label{rankbound}  {\rm rank}(L(\tm)) \leq k \tm < k \frac{n}{k} = n. 
\end{equation}  

Next we can use the standard interpretation of the singular values in terms of the distance to the 
closest low-rank matrix, i.e.,
\[ \sigma_n^2(S(\tm)) \leq \inf_{{\rm rank} ~M < n} ~||S(\tm) - M||_2^2, \] see e.g., Section 7.4.2 of \citep{hornj}. Putting this together with 
Eq. (\ref{rankbound}) and Eq. (\ref{sbound}), we obtain 
\begin{equation} \label{sigmabound} \sigma_n^2(S(\tm)) \leq \left( {\rm Err}(\tm, \X)  \right)^2 ~||V||_2^2 ~ ||B||_F^2.
\end{equation}

We now put together the various estimates we have derived. The first step is bound $\lambda_n(W(\tm))$ in terms of $\lambda_n(Q(\tm))$:
\begin{align*} \lambda_n(W(\tm)) &  = \lambda_n (V^{-1} Q(\tm) V^{-*}) \\[0.5ex] 
& =  \min_{x^* x = 1} ~x^* V^{-1} Q(\tm) V^{-*} x \\[0.5ex] 
& = \min_{y = V^{-*} x, x^*x=1} ~~ y^* Q(\tm) y 
\end{align*} Observe, however, that since $Q(\tm)$ is Hermitian, $y^* Q(\tm) y$ is nonnegative; furthermore, as $x$ ranges over the unit sphere in $\C^n$, $V^{-*} x$ ranges over 
a nondegenerate ellipsoid whose every element has norm at most $||V^{-*}||_2$. Therefore, 
\begin{align} \lambda_n(W(\tm))  & \leq \min_{||y||_2 = ||V^{-*}||_2} ~ y^* Q(\tm) y \nonumber \\[0.5ex]
& \leq  ||V^{-*}||_2^2  ~\lambda_{n} \left( Q(\tm) \right) \nonumber \nonumber \\[0.5ex] 
& =   ||V^{-1}||_2^2  ~\sigma_n^2(S(\tm))  \nonumber \nonumber \\[0.5ex] 
& =   ||V^{-1}||_2^2 ~ ||V||_2^2  ~\left( {\rm Err}(\tm, \X)  \right)^2 ||B||_F^2 \nonumber  \\[0.5ex] 
& = {\rm cond}^2(V) ~\left( {\rm Err}(\tm, \X)  \right)^2 ||B||_F^2 \label{approxform} 
\end{align}  Combining the last equation with Eq. (\ref{cap}) (which bounds  $ {\rm Err}(\tm, \X) $ in terms of $\cap$) completes the proof. 
\end{proof}

%

\subsection{Some concrete estimates\label{concrete}}

The logarithmic capacities of many different sets have been worked out in the literature, and we can use them to spell out some 
concrete forms of Theorem \ref{mainthm}. 

Let us begin by revisiting the example we have previously mentioned in Section \ref{introex}. Suppose that $A \in \C^{n \times n}$ has all of its eigenvalues in an equilateral triangle $\X$ of length $l$; further, suppose $b={\bf e}_1$. Then it is known that  \citep{dijkstra}, $$\cap = \frac{(\Gamma(1/3))^2}{4 \pi^2} l \approx 0.18\cdot  l,$$ so that 
Theorem \ref{mainthm} specializes to the bound 
\[  \lambda_{\rm min} \left( W \left( n-1 \right) \right)   \leq  {\rm cond}^2(V)     \left( \left(1 + o_{n}(1) \right) \cdot 0.18^2 \cdot l^2 \right)^{ n} 
\] Plugging in $l=2$ and using the fact that $o_{n}(1)$ becomes arbitrarily small for large enough $n$, we obtain Eq. (\ref{equilateral}) from Section \ref{introex}. We may view this last equation as a somewhat more concrete form of Theorem \ref{mainthm}. 

Along the same lines, the following proposition collects a number of estimates of the logarithmic capacity from the literature. 

\begin{proposition}  Suppose that $\X$ is:
\begin{itemize} 
\item ...ellipse with semi-axes $a$ and $b$. Then \citep{dijkstra}, $\cap = (a+b)/2$. 
\item ...a rectifiable curve length $l$. Then \citep{dubinin}, $\cap \leq l/4$. In particular, if $X = [a,b]$ then \citep{polya2}, $\cap = (b-a)/4$.
\item ...two intervals on the real line of the form $[-b,-a] \cup [a,b]$. Then \citep{liesen}, $$\cap = \frac{\sqrt{b^2-a^2}}{2}.$$
\item ...a half disk of radius $r$. Then \citep{land}, $\cap = 4r/3^{3/2}$. 
\item ...square with side $l$. Then \citep{dijkstra},  $$\cap = \frac{(\Gamma(1/4))^2}{4\pi^2} l \approx 0.59 l.$$ 
\item ...a regular $n$-gon with side $h$. Then \citep{land},
$$ \cap = \frac{\Gamma(1/n)}{2^{1+2/n} \pi^{1/2} \Gamma(1/2 + 1/n)} h. $$
\item ...a compact set of diameter $D$. Then (\citep{polya}, quoted in \citep{barnard}) $\cap \leq D/2$. 

\end{itemize}
\end{proposition}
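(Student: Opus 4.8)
The Proposition is really a catalog of known closed-form evaluations and bounds for logarithmic capacity of specific planar sets, so the plan is not to rederive each item from scratch but to organize the proofs around two recurring tools: conformal mapping (for exact evaluations) and monotonicity/symmetrization (for the inequalities). The key fact underlying the exact computations is that if $\Phi$ maps the exterior of $K$ conformally onto the exterior of the unit disk with $\Phi(\infty)=\infty$ and $\Phi'(\infty)=1/c$, then ${\rm cap}(K)=c$; equivalently, if $K$ is the image of $\{|w|\ge 1\}$ under a univalent map $w\mapsto cw + a_0 + a_1/w + \cdots$, then ${\rm cap}(K)=|c|$. This immediately handles the first three bullets: for an ellipse with semi-axes $a,b$ one checks the Joukowski-type map $w\mapsto \frac{a+b}{2}w + \frac{a-b}{2}\,w^{-1}$ carries $\{|w|=1\}$ onto the ellipse, giving ${\rm cap}=(a+b)/2$; specializing $b=0$ gives the segment $[a,b]$ with ${\rm cap}=(b-a)/4$ after rescaling and translating; and for the symmetric two-interval set $[-b,-a]\cup[a,b]$ one uses the substitution $z^2=\zeta$, under which the set becomes a single interval $[a^2,b^2]$ and capacity transforms by the square-root rule ${\rm cap}(K) = \sqrt{{\rm cap}(K')}$ valid for sets invariant under $z\mapsto -z$, yielding ${\rm cap}=\sqrt{b^2-a^2}/2$.

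For the polygonal sets — the equilateral triangle (already used in the text), the square, and the regular $n$-gon — the exterior conformal map is given by a Schwarz–Christoffel integral, and the capacity is the derivative of that map at infinity, which evaluates to a ratio of Gamma functions; the half-disk is handled the same way by treating it as a degenerate ``polygon'' with one circular arc, or by composing known maps. Here the plan is simply to quote the cited computations (\citep{dijkstra}, \citep{land}) and, at most, indicate the Schwarz–Christoffel normalization constant whose evaluation via the Beta function produces the stated Gamma-function expressions. The half-disk value $4r/3^{3/2}$ and the square value $(\Gamma(1/4))^2/(4\pi^2)\,l$ are both instances of this.

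For the two inequalities — ${\rm cap}(K)\le l/4$ for a rectifiable curve of length $l$, and ${\rm cap}(K)\le D/2$ for a compact set of diameter $D$ — the tool is \emph{symmetrization} together with the monotonicity ${\rm cap}(K_1)\le{\rm cap}(K_2)$ when $K_1\subseteq K_2$. For the diameter bound, pick two points of $K$ at distance $D$; then $K$ is contained in the lens-shaped intersection of two disks of radius $D$ centered at those points, and one bounds the capacity of that lens (or, more cheaply, uses the classical fact via Chebyshev polynomials that for any two points $u,v$, $\max_{z\in K}|(z-u)(z-v)|$-type extremal quantities control capacity, giving ${\rm cap}(K)\ge \frac14|u-v|$ on the \emph{other} side and the circumscribed-disk argument for the upper bound). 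For the curve-length bound one uses that circular projection (Pólya–Szegő circular symmetrization) does not decrease capacity while it can only shorten a connecting curve, reducing to the segment case where ${\rm cap}=l/4$ exactly. In all cases I would cite \citep{dubinin}, \citep{polya}, \citep{polya2}, \citep{barnard} for the sharp constants rather than reprove the symmetrization inequalities.

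The main obstacle, such as it is, is not mathematical depth but bookkeeping: making sure the conformal maps are normalized so that $\Phi'(\infty)$ (equivalently the leading Laurent coefficient of the inverse map) is read off correctly, since an error there corrupts the capacity by a multiplicative constant; and, for the Schwarz–Christoffel cases, correctly evaluating the normalization integral $\int_0^1 t^{a-1}(1-t)^{b-1}\,dt = B(a,b) = \Gamma(a)\Gamma(b)/\Gamma(a+b)$ with the right exponents determined by the polygon's exterior angles. Since every one of these computations already appears in the cited references, the honest proof of the Proposition is simply to assemble these citations and verify each normalization, which is what I would do.
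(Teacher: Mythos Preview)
Your proposal is correct and, in its bottom line, matches the paper exactly: the paper offers no proof whatsoever for this Proposition, treating it purely as a catalog of results quoted from the cited references \citep{dijkstra, dubinin, polya2, liesen, land, polya, barnard}. Your sketches of the underlying conformal-mapping and symmetrization arguments are more than the paper provides and are essentially sound (the diameter-bound sketch is a bit muddled---the lens containment and the Chebyshev remark point in opposite directions---but since you conclude by deferring to \citep{polya, barnard} anyway, this does not affect the proposal).
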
 

One may use this proposition to come up with statements similar to Eq. (\ref{equilateral}) for other sets of the complex plane. For example, one can use the
first bullet of the proposition on the logarithmic capacity of the circle to obtain Eq. (\ref{circle}) from Section \ref{introex}; we omit the details of the calculation.

\section{A lower bound on control energy for Hermitian systems with stable eigenvalues\label{secsymm}} 

In this section we will consider the case of Hermitian matrices $A$ with $m$ eigenvalues within the interval $[-1,1]$. We will be able to obtain
stronger results in this case, in particular showing that $W(t)$ has an eigenvalue close to zero until $t$ is nearly quadratic in $m/k$. A formal statement
of this is in the following theorem. 

\begin{theorem} Suppose $A$ is a Hermitian matrix with $m$ stable eigenvalues where $m > 2k$. Let $q$ be any nonnegative number and set $$t_{\rm quad} :=   \frac{(\lceil m/k \rceil - 2)^2}{q}. $$ Then if $t \leq  t_{\rm quad} $  we have
\[ \lambda_n(W(t)) \leq 4 \frac{\left( \lceil m/k \rceil - 2 \right)^2}{q} e^{-q} ||B||_F^2. \] \label{mainsymm}
\end{theorem}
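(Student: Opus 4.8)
The plan is to follow the same projection-and-rank argument used for Theorem \ref{mainthm}, but now exploit the fact that $A$ is Hermitian to replace the logarithmic-capacity estimate of ${\rm Err}(l,\X)$ by a concrete polynomial-approximation bound on the interval $[-1,1]$. Since $A$ is Hermitian we may write $A = U D U^*$ with $U$ unitary and $D$ diagonal, so the ${\rm cond}^2(V)$ factor disappears, and $W(t)$ is unitarily equivalent to $Q(t) = \sum_{i=0}^t D^i Z Z^* \overline{D}^i$ with $Z = U^* B$ and $\|Z\|_F = \|B\|_F$. As before, set $S(t) := [Z, DZ, \ldots, D^t Z]$ so that $S(t)S(t)^* = Q(t)$ and $\lambda_n(Q(t)) = \sigma_n^2(S(t))$; it thus suffices to show $S(t)$ is close, in Frobenius norm, to a matrix of rank less than $n$.

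Here is where the argument diverges from Theorem \ref{mainthm}. Let $m' := \lceil m/k \rceil - 2$. Restrict attention to the $m$ coordinates where the eigenvalues of $D$ lie in $[-1,1]$; call the corresponding principal submatrix $D_0$ and the corresponding rows of $Z$ the block $Z_0$. For the first $\lceil m/k\rceil - 1$ block-columns of $S(t)$, replace each block $D^j Z$ by its orthogonal projection onto ${\rm span}(Z_0, D_0 Z_0, \ldots)$ restricted suitably — more precisely, for each column $z_i$ of $Z$ and each power $j \le t$, project $D^j z_i$ (within the stable coordinates) onto the span of $\{z_i, D z_i, \ldots, D^{m'-1} z_i\}$ using the best monic-polynomial characterization, exactly as in Eq. (\ref{minchar}). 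The key point is that for $j$ with $m' \le j \le t$, the residual $\|P_{(\mathcal S_i^{m'-1})^\perp} D^j z_i\|_2$ is at most the error of approximating $t^j$ by a polynomial of degree $m'-1$ on $[-1,1]$, and for $|z|\le 1$ one has $|z|^j \le 1$, so this is bounded by $2\,2^{-m'}$ using the standard Chebyshev estimate $\min_{p\in\mathcal P_{m'-1}}\max_{|z|\le 1}|z^j - p(z)| \le 2^{-(m'-1)}$ — but crucially we must be more careful, since there are $t+1$ such columns, not just $k$. Summing the squared residuals over all $(t+1)k$ columns of $S(t)$ and taking the rank-$m'k$ matrix $L(t)$ obtained by truncating each column's residual, we get $\|S(t) - L(t)\|_F^2 \le (t+1)k \cdot \big(2\cdot 2^{-m'}\big)^2 \cdot \max_i\|z_i\|_2^2$ and ${\rm rank}(L(t)) \le m'k + k\lceil m/k\rceil - \ldots$, which must be arranged to be strictly less than $n$. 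Then $\lambda_n(W(t)) = \sigma_n^2(S(t)) \le \|S(t)-L(t)\|_F^2$.

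The delicate step — and the one I expect to be the main obstacle — is the bookkeeping that turns a per-column error of $2^{-m'}$ into the stated bound $4 (m'^2/q) e^{-q}$ with the free parameter $q$. The resolution is that one does not approximate $z^j$ by a fixed-degree polynomial independent of $j$; rather, for the column $D^j z_i$ one uses a polynomial of degree growing with $j$, and the relevant quantity is the worst case over $j \le t$ of the product of the residual with the number of surviving columns. The appearance of $q$ and the threshold $t \le t_{\rm quad} = (m')^2/q$ strongly suggests that the right estimate is $\min_{p\in\mathcal P_{d}}\max_{|z|\le 1}|z^j-p(z)|$ behaves like $\exp(-\Theta(d^2/j))$ when $d \ll \sqrt j$ (this is the classical fact that $x^j$ on $[-1,1]$ is well approximated by a polynomial of degree $\sim\sqrt{j}$, with error roughly $e^{-c d^2/j}$ for $d\le\sqrt j$). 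Choosing the degree budget $d = m'$ and using $t \le (m')^2/q$ gives a residual $\lesssim e^{-q/2}$ per column and $\sim t k \lesssim (m')^2 k/q$ columns, which multiply to the claimed $4(m'^2/q)e^{-q}\|B\|_F^2$ after absorbing constants. So the crux is: (i) pinning down the precise sub-Chebyshev estimate $\|x^j - \text{best deg-}d\text{ approx}\|_{\infty,[-1,1]} \le 2 e^{-d^2/(2j)}$ or similar, valid uniformly for $j \ge d$, and (ii) checking the rank count $m'k < n$ closes using $m' = \lceil m/k\rceil - 2$ and $m > 2k$. Everything else is a transcription of the proof of Theorem \ref{mainthm}.
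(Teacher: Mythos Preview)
Your proposal is essentially the paper's proof: diagonalize unitarily, restrict to the $m$ stable coordinates, approximate the late columns of the controllability matrix by polynomials of a fixed degree in the stable part of $D$, and invoke the low-rank interpretation of singular values. Your item (i), the estimate $\max_{|x|\le 1}|x^j - p(x)| \le 2e^{-d^2/(2j)}$ for the best degree-$d$ approximant, is exactly the paper's Lemma~\ref{philemma} (proved there via the Chebyshev expansion of $x^j$ combined with Hoeffding's inequality), and summing $\Phi_{j,d}^2$ over $d\le j\le d^2/q$ is the paper's Lemma~\ref{sumlemma}.

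Two clarifications where your sketch wobbles. First, the approximating polynomial has \emph{fixed} degree $t'-1=\lceil m/k\rceil-2$ for every column $D^j z_i$; what varies with $j$ is the error $\Phi_{j,t'-1}$, not the degree, so your sentence ``one uses a polynomial of degree growing with $j$'' is a wrong turn that you immediately abandon anyway. Second, the cleanest way to handle the restriction to stable coordinates is the paper's: apply Courant--Fischer over vectors supported on the first $m$ coordinates to get $\lambda_n(Q(t))\le \sigma_m^2(S(m,t))$, where $S(m,t)$ keeps only the $m$ stable rows; then the rank target is $<m$, which $k t'<m$ delivers directly. Your variant of keeping the full $S(t)\in\C^{n\times k(t+1)}$ and building $L(t)$ to agree with $S(t)$ on the unstable rows can also be made to work, but then the correct rank bound is ${\rm rank}(L(t))\le (n-m)+k t'<n$, not the unfinished expression in your second paragraph; without accounting for the $(n-m)$ unstable rows you cannot conclude rank $<n$.
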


We remark that by plugging in $m=5000$, $k=1$, $||B||_F=1$, and $q=99$, we can obtain Eq. (\ref{first-sym}), which was discussed in the introduction. The next Eq. (\ref{second-sym}) can be obtained by plugging in $q=24$ with the same
values of $m$, $k$, and $||B||_F$. We also remark that the theorem holds for any nonnegative $q$ and one can, for example, plug in $q = \sqrt{m/k}$ to obtain the bound
$\lambda_{\rm min} \left(W (t) \right) \leq O \left( (m/k)^{3/2} e^{-\sqrt{m/k}} \right)$ when $t = O \left( (m/k)^{3/2} \right)$ (we mentioned this inequality
 in the introduction).

To prove Theorem \ref{mainsymm}, we will need to derive some additional properties of polynomial approximations over the interval $[-1,1]$. For integers $n,m$ satisfying $n \geq m \geq 1$, we define 
\[ \Phi_{n,m} :=  \min_{p_m \in { \Pi}_m} \max_{|x| \leq 1} \left| x^n - p_m(x) \right|, \] where $\Pi_m$ is the set of polynomials of degree $m$ with real coefficients. In other words, $\Phi_{n,m}$ is the  error involved in approximating the function $x^n$ over $[-1,1]$ by an $m$'th degree polynomial in $x$.

It has been observed in \citep{zam} that when $n \geq m \geq cn$ for some fixed $c$, the quantity $\Phi_{n,m}$ decays exponentially in $n$. The following lemma gives an explicit estimate of the decay rate.

\begin{lemma} \[  \Phi_{n,m} \leq 2 e^{-m^2/(2n)} \]  \label{philemma}
\end{lemma}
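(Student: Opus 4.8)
The plan is to produce the best degree-$m$ approximant of $x^n$ on $[-1,1]$ by truncating the Chebyshev expansion of $x^n$, and then to recognize the resulting approximation error as a binomial tail probability controllable by a Chernoff/Hoeffding estimate.

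First I would write down the Chebyshev expansion of $x^n$. Putting $x=\cos\theta$ and expanding
\[
\cos^n\theta=\frac{1}{2^n}\bigl(e^{i\theta}+e^{-i\theta}\bigr)^n=\frac{1}{2^n}\sum_{j=0}^{n}\binom{n}{j}\cos\bigl((n-2j)\theta\bigr),
\]
then using $\cos(k\theta)=T_k(\cos\theta)$ together with $T_{-k}=T_k$, one obtains the identity
\[
x^n=\frac{1}{2^n}\sum_{j=0}^{n}\binom{n}{j}\,T_{|n-2j|}(x),\qquad x\in[-1,1].
\]
Next I would let $p_m$ be the partial sum obtained by retaining only those terms with $|n-2j|\le m$; since each $T_k$ has real coefficients and degree exactly $k$, this gives $p_m\in\Pi_m$. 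Because $|T_k(x)|\le 1$ on $[-1,1]$, the sup-norm of the discarded tail is at most the sum of the absolute values of the omitted coefficients, so
\[
\Phi_{n,m}\le\max_{|x|\le 1}\bigl|x^n-p_m(x)\bigr|\le\frac{1}{2^n}\sum_{\,j:\,|n-2j|>m}\binom{n}{j}.
\]
The index set $\{j:|n-2j|>m\}$ is the disjoint union of $\{j<(n-m)/2\}$ and $\{j>(n+m)/2\}$, and the symmetry $\binom{n}{j}=\binom{n}{n-j}$ shows these two partial sums are equal, so the right-hand side equals $\tfrac{2}{2^n}\sum_{j<(n-m)/2}\binom{n}{j}$.

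Finally I would interpret $2^{-n}\sum_{j\le (n-m)/2}\binom{n}{j}$ as $\Pr\!\left[S_n\le (n-m)/2\right]$, where $S_n$ is a sum of $n$ independent fair Bernoulli variables with mean $n/2$, and apply Hoeffding's inequality with deviation $t=m/2$:
\[
\Pr\!\left[S_n\le \tfrac{n}{2}-\tfrac{m}{2}\right]\le e^{-2(m/2)^2/n}=e^{-m^2/(2n)}.
\]
Chaining these bounds yields $\Phi_{n,m}\le 2e^{-m^2/(2n)}$, as claimed; the hypothesis $n\ge m\ge 1$ ensures $(n-m)/2\in[0,n/2)$ so every step above is meaningful.

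I do not anticipate a serious obstacle here. The only points requiring care are the parity bookkeeping in the Chebyshev identity — in particular the middle term when $n$ is even, which is harmless once the expansion is written uniformly with $T_{|n-2j|}$ and the resulting double counting is absorbed into the leading factor of $2$ — and the (harmless) passage from the strict condition $j<(n-m)/2$ to the binomial tail $j\le (n-m)/2$. The one idea worth stating explicitly is that truncating a Chebyshev series converts the polynomial approximation error of $x^n$ into a binomial tail, after which a standard concentration bound finishes the job.
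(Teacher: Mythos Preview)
Your proposal is correct and follows essentially the same approach as the paper: expand $x^n$ in Chebyshev polynomials, truncate to degree $m$, bound the tail by a binomial sum, and finish with Hoeffding's inequality. The only cosmetic difference is that the paper writes the Chebyshev identity as a half-sum $\frac{1}{2^{n-1}}\sum_{i=0}^{\lfloor n/2\rfloor}\binom{n}{i}T_{n-2i}(x)\,\frac{\delta_{i,n}}{2}$ (with a correction factor for the middle term), whereas you write the equivalent full symmetric sum $\frac{1}{2^{n}}\sum_{j=0}^{n}\binom{n}{j}T_{|n-2j|}(x)$ and recover the factor of $2$ from the symmetry $\binom{n}{j}=\binom{n}{n-j}$; the resulting tail bound and Hoeffding application are identical.
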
 

\begin{proof} Just like the corresponding proof in \citep{zam}, our starting point is the well-known identity
\begin{equation} \label{x-exp} x^n = \frac{1}{2^{n-1}} \sum_{i=0}^{\lfloor n/2 \rfloor} {n \choose i} T_{n - 2i}(x) \frac{\delta_{i,n}}{2} \end{equation} where $T_k(x)$ the  Chebyshev polynomial of degree $k$, and $\delta_{i,n}$ equals $1$ if $i=n/2$ and $2$ otherwise (for a reference, see Eq. (2.14) in \citep{chebbook}).  
Let $i'$ be the first integer such that $n - 2i' \leq m$, i.e., $i' = \lceil (n-m)/2 \rceil$. Then, to approximate $x^n$ by a polynomial of degree at most $m$, we might try the polynomial 
\[ p_{n,m} = \frac{1}{2^{n-1}} \sum_{i=i'}^{\lfloor n/2 \rfloor} {n \choose i} T_{n - 2i} \frac{\delta_{i,n}}{2}, \] which, note, has degree at most $m$.  As a consequence of Eq. (\ref{x-exp}) as well as the fact that $|T_k(x)| \leq 1$ for all $x \in [-1,1]$, this leads to the bound \begin{small}
\begin{eqnarray*} \Phi_{n,m}   \leq   \frac{1}{2^{n-1}} \sum_{i =0, \ldots, i'-1 } {n \choose i} 
 =  2 \left(  \frac{1}{2^{n}} \sum_{i =0}^{i'-1} {n \choose i} \right)
\end{eqnarray*} \end{small} The expression in big parentheses is the probability that a fair coin lands on heads at most $i' - 1$ times 
out of $n$ tosses. We use the following form of Hoeffding's inequality 
\[ \frac{1}{2^{n}} \sum_{i =0, \ldots, (1/2-\epsilon) n} {n \choose i}  \leq e^{-2 \epsilon^2 n}, \] for a formal reference see Section B.4 in \citep{ml}. Since $i' -1 \leq n/2$, we may apply this form of the Hoeffding bound to obtain 
\begin{align*} \Phi_{n,m} & \leq 2 e^{-2n (1/2 - ( i' - 1 )/n)^2 } \\
& = 2 e^{-2n (1/2 -  ( \lceil (n-m)/2 \rceil - 1)/(n)   )^2} \\
& \leq 2 e^{-2n(\frac{1}{2} - \frac{(n-m)/2}{n})^2} \\
& \leq  2 e^{-m^2/(2n)}
\end{align*}
\end{proof} 



%

\begin{lemma} Let $q$ be any nonnegative number. We have that 
if $m \geq 1$,
$$\sum_{n=m}^{\lfloor m^2/q \rfloor} \Phi_{n,m}^2 \leq 4 \frac{m^2}{q} e^{- q}. $$ \label{sumlemma}
\end{lemma}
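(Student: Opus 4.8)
The plan is to obtain this bound as an essentially immediate consequence of Lemma \ref{philemma}, combined with a crude count of the number of terms in the sum. First I would square the estimate of Lemma \ref{philemma} to get $\Phi_{n,m}^2 \leq 4 e^{-m^2/n}$ for every $n \geq m$. The decisive observation is that the summation range in the statement was chosen precisely so that $m^2/n$ stays large: every index $n$ occurring in the sum satisfies $n \leq \lfloor m^2/q \rfloor \leq m^2/q$, hence $m^2/n \geq q$, and therefore $e^{-m^2/n} \leq e^{-q}$. This gives a single bound $\Phi_{n,m}^2 \leq 4 e^{-q}$ valid uniformly over all indices appearing in the sum.

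Next I would bound the number of summands. The sum runs over $n = m, m+1, \ldots, \lfloor m^2/q \rfloor$, so it has at most $\lfloor m^2/q \rfloor - m + 1$ terms; since $\lfloor x\rfloor \leq x$ and $m \geq 1$ forces $-m+1 \leq 0$, this count is at most $m^2/q$. Multiplying the uniform per-term bound $4 e^{-q}$ by the bound $m^2/q$ on the number of terms yields exactly $4 \frac{m^2}{q} e^{-q}$, as claimed.

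The only points requiring a little care are the degenerate cases. If $\lfloor m^2/q \rfloor < m$ the sum is empty, hence equal to $0$, and the inequality holds trivially because its right-hand side is nonnegative; and if $q = 0$ the right-hand side is $+\infty$, so the statement is vacuous. Apart from keeping the direction of the floor inequalities straight, there is no real obstacle here: the lemma is a one-line corollary of Lemma \ref{philemma} once one notices the role of the cutoff $\lfloor m^2/q\rfloor$ in making $m^2/n \geq q$ throughout the range of summation.
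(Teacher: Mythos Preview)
Your proposal is correct and follows essentially the same argument as the paper: square the bound from Lemma~\ref{philemma}, observe that every index in the range satisfies $n \leq m^2/q$ so that $e^{-m^2/n} \leq e^{-q}$, and bound the number of terms by $m^2/q$. If anything, you are slightly more careful than the paper in handling the degenerate cases (empty sum and $q=0$).
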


\begin{proof} Indeed, by Lemma \ref{philemma}, 
\begin{align*} \sum_{n=m}^{\lfloor m^2/q \rfloor} \Phi_{n,m}^2  & \leq   \sum_{n=m}^{\lfloor m^2/q \rfloor} \left( 2 e^{-m^2/(2n)} \right)^2  \\[1.5ex]
& \leq 4   \sum_{n=m}^{\lfloor m^2/q \rfloor}  e^{-m^2/n}  
\end{align*}
The number of terms in the last sum is upper bounded by $m^2/q$, and each term is upper bounded by $$e^{-\frac{m^2}{m^2/q}} = e^{-q}.$$ This immediately implies
the lemma. 
\end{proof}

With this lemma in hand, we are now ready to prove Theorem \ref{mainsymm}.

\begin{proof}[Proof of Theorem \ref{mainsymm}] As in the proof of Theorem \ref{mainthm}, let $$V A V^{-1} = D$$ be the diagonalization of $A$; here $V$ is unitary since $A$ is Hermitian, and we will assume without loss of generality that $d_{11}, d_{22}, \ldots, d_{mm} \in [-1,1]$.

 Let $Q(t) = V W(t) V^*$ be the same as defined earlier.  As a consequence of the fact that $V$ is unitary, we have that $$\lambda_n(W(t)) = \lambda_n(Q(t)).$$

We define $D(m)$ to be the principal $m \times m$ submatrix\footnote{That is, the submatrix obtained by taking the first $m$ rows and first $m$ columns.} of the diagonal matrix $D$, let $z_i(m)$ be the vector obtained by taking the first $m$ entries of the vector\footnote{Recall that $Z=VB$ and $z_1, \ldots, z_k$ are the columns of the matrix $Z$.} $z_i$, 
and let $S(m,t)$ be the matrix with columns $D(m)^j z_i(m)$ for $i = 1, \ldots, k$ and $j=0, \ldots, t$. As before, $S(t)$ is the matrix with columns $D^j z_i$ with $i,j$ running
over the same ranges; recall that $Q(t) = S(t) S(t)^*$. Finally, abusing notation slightly, we will now use $S_i^j$ to denote the span of the vectors $z_i(m), D(m) z_i(m), \ldots, D(m)^j z_i(m)$. 

Using the Courant-Fischer theorem, we obtain \begin{small}
\begin{align} \lambda_{\rm min} (Q(t)) & =  \min_{x \in \C^n, ~x^*x=1} ~x^* S(t) S(t)^* x \nonumber \\[1.5ex]
& \leq  \min_{\substack{x \in \C^n, ~x^* x = 1 \\ x_{m+1}=x_{m+2} = \cdots = x_n = 0}} ~~~~ x^* S(t) S(t)^* x \nonumber \\[1.5ex]
& =  \min_{x \in \C^m, ~x^* x = 1} ~~ x^* S(m, t) S(m, t)^* x \nonumber \\[1.5ex]
& =  \sigma_{m}^2(S(m,t)). \label{sigmalambda} 
\end{align}  \end{small} We thus have 
\begin{equation} \label{lams} \lambda_n(W(t)) \leq \sigma_m^2(S(m,t)). \end{equation}
We now turn to the problem of upper bounding the $m$'th smallest singular value of $S(m,t)$, which we will do by arguing, along the lines of the proof of Theorem \ref{mainthm}, that $S(m,t)$ is well-approximated by a matrix whose rank is strictly less than $m$. 

Indeed, define $t'=\lceil m/k \rceil - 1$. We may as well assume  that $t \geq t'$ since otherwise there is nothing to prove. Define further the matrix $\widehat{L}(m,t)$ as having its first $k t'$ columns being $D(m)^j z_i(m)$, $i=1, \ldots, k$ and $j = 0, \ldots, t' -1$; however,
its final $k(t+1-t')$ columns will be the
vectors $P_{{\mathcal S}_i^{t' - 1}} D(m)^{j} z_i(m)$, $i=1, \ldots, k$ and $j=t', \ldots, t$. Note that $\widehat{L}(m,t)$ is roughly analogous to the matrix $L(m,\tm)$ defined in the proof of Theorem \ref{mainthm}. 

As in the proof of Theorem \ref{mainthm}, we have that
\[ {\rm rank} (\widehat{L}(m,t)) \leq k t' < k \frac{m}{k} = m. \]  We will next argue that $\widehat{L}(m,t)$ and $S(m,t)$ are close to each other. Indeed, we have that for all $l \geq j$, \begin{footnotesize}
\begin{align*} \left| \left| P_{(S_i^j)^\perp} D(m)^{l} z_i(m) \right| \right|_2  & = \min_{p \in {\cal P}_j} \left| \left| \left( D(m)^l - p(D(m)) \right) z_i(m) \right| \right|_2 \\
& \leq   \Phi_{k,j} ||z_i(m)||_2,
\end{align*}
\end{footnotesize} where the first step follows from definition of projection and the second step used the fact that all diagonal entries of $D(m)$ belong to $[-1,1]$. Thus, 
\begin{footnotesize}
\begin{align*} ||S(m,t) - \widehat{L}(m, t)||_2^2 & \leq  ||S(m,t) - \widehat{L}(m, t)||_{\rm F}^2 \\ & \leq  \sum_{i=1}^k \sum_{l=t'}^{t} \left| \left|  P_{\left({\mathcal S}_i^{t' - 1}\right)^\perp}  D(m)^{l} z_i(m)  \right| \right|_2^2 \nonumber  \\
& \leq \sum_{i=1}^k  ||z_i(m)||_2^2 \sum_{l=t'}^{\lfloor t_{\rm quad} \rfloor} \Phi_{l, t'-1}^2 
\end{align*}  \end{footnotesize} where we used the fact that $t$ is an integer and $t \leq t_{\rm quad}$ in the last inequality.  Note that we may as well begin the second sum above at $l=t'-1$. Observe that by
definition $t_{\rm quad} = (t'-1)^2/q$, and thus we can use Lemma \ref{sumlemma} to bound the sum. Furthermore, due to the assumption $m>2k$ we have $t'-1 \geq 1$.  Thus applying Lemma \ref{sumlemma}, we obtain \begin{small}
\[ ||S(m,t) - \widehat{L}(m, t)||_2^2 \leq 4 \frac{(\lceil m/k \rceil - 2)^2}{q } e^{-q} ||B||_F^2. \] 
\end{small}
Putting it all together: this inequality along with ${\rm rank} (\widehat{L}(m,t)) < m$   implies that the right-hand side is an upper bound on $\sigma_m^2(S(m,t))$,  and via Eq. (\ref{lams}) it is also an upper bound on $\lambda_n(W(t))$.

\end{proof}


\section{Conclusion\label{concl}} We have proven two bounds to the effect that if the eigenvalues of the matrix $A$ are clustered, the linear system of Eq. (\ref{maineq}) is difficult to control. Our work points the way to a number of open questions. 

First, it is unclear whether it is possible to extend the bounds of Theorem \ref{mainthm} for $t$ that are much greater than $\tm$. The antecedent work \citep{pasq} bounded the eigenvalues of $W(t)$ for all $t$ under the assumption of clustering within the interior of the unit circle, and it is at present  unclear if Theorem \ref{mainthm} can be extended in such a way. Second, we conjecture that the bounds of Theorem \ref{mainsymm} are essentially tight; specifically, we conjecture that for all $n$ there exists a symmetric matrix in $\R^{n \times n}$ with all of its eigenvalues in $[-1,1]$ and with $\lambda_{\rm min}(W(t)) \geq 1$ for all $t \geq \Omega(n^2 \log^k n)$ for some $k$. 

More broadly, the main contribution of this note is to highlight the apparently subtle connection between eigenvalue clustering and control energy (recall the contrast between the bounds we discussed in Section \ref{introex}). An open problem is to understand this connection better.


\end{document}